\documentclass[a4paper,12pt]{article}

\usepackage[utf8]{inputenc}

\usepackage{bbm}

\usepackage{enumitem}


\usepackage{amsmath,amsfonts,amssymb,amsthm}

\setlength{\textwidth}{15cm}
\setlength{\hoffset}{0.46cm}
\setlength{\oddsidemargin}{0cm}
\setlength{\marginparsep}{0cm}
\setlength{\marginparwidth}{0cm}
\setlength{\textheight}{24.2cm}
\setlength{\headheight}{0cm}
\setlength{\topmargin}{0cm}
\setlength{\headsep}{0cm}
\setlength{\paperwidth}{29cm}

\def\R{\mathbb{R}}

\def\T{\mathcal{T}}

\def \ml {\begin{pmatrix}} 
\def \mr {\end{pmatrix}}

\def \feq#1 {\begin{center}\framebox{$\displaystyle #1$}\end{center}}

\def \e{\varepsilon}

\newcommand{\Fix}{{\rm Fix}}

\reversemarginpar

\author{M. ZAVIDOVIQUE\thanks{IMJ, Universit\' e Pierre et Marie Curie, Case 247,
4 place Jussieu, 
F-75252 Paris cedex 05 } }

\newtheorem{df}{Definition}[section]
\newtheorem{lm}[df]{Lemma}

\newtheorem{Th}[df]{Theorem}

\title{Fixed points of contractions approximating $1$-Lipschitz maps}

\begin{document}

\maketitle

\begin{abstract}
A $1$-Lipschitz map $f$ from a convex compact set to itself has fixed points. This consequence of Brouwer's or Schauder's fixed point theorem has more elementary proofs by approximating $f$  by $\lambda$-contractions, $f_\lambda$. We study the convergence of the fixed points of those contractions as they converge to $f$.
\end{abstract}

\section*{Introduction}
Given a compact metric space $X$ and a continuous function $c : X\times X \to \R$, consider the map $T$, from $C^0(X,\R)$ to itself, defined by 
$$\forall u\in C^0(X,\R), \ \ \forall x\in X, \quad Tu(x) = \min_{y\in X} \big(u(y) + c(y,x)\big).$$
It is easily checked that $T$ is $1$-Lipschitz for the sup norm $\|\cdot \|_\infty$, therefore if $\lambda\in (0,1)$ the function $T_\lambda : u \mapsto T(\lambda u)$ is a $\lambda$-contraction from $C^0(X,\R)$ to itself. It follows from the Banach fixed point theorem that there exists a unique function $u_\lambda$ verifying $T_\lambda (u_\lambda) = u_\lambda$.  The following theorem was proved in \cite{DFIZ} (see also \cite{DFIZ2} for related results):
\begin{Th}\label{DFIZ}
There exists a constant $\alpha\in \R$ such that, as $\lambda \to 1$, $u_\lambda + \frac{\alpha}{1-\lambda}$ uniformly converges to a function $u_1: X\to \R$ that verifies the following equation :
$$u_1 = Tu_1 + \alpha.$$
\end{Th}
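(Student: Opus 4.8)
The plan is to study the map $T$ and its contractions $T_\lambda$ directly using the structure of $T$, which is a min-plus (tropical) operator. The key observation is that $T$ is not merely $1$-Lipschitz but also \emph{order-preserving} and commutes with the addition of constants: $T(u+c) = Tu + c$ for every $c\in\R$. From these two properties one can deduce a lot about the dynamics of $T_\lambda$. First I would establish a uniform bound: since $X$ is compact and $c$ is continuous, $c$ is bounded, say $|c|\le M$ on $X\times X$; then iterating $T_\lambda$ starting from $0$ and using order-preservation together with $T_\lambda(u+c)=\lambda(Tu)+ \dots$, one controls the oscillation $\operatorname{osc}(u_\lambda) = \max u_\lambda - \min u_\lambda$ uniformly in $\lambda$. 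The natural guess for $\alpha$ is the "ergodic constant" $\alpha = \lim_{\lambda\to 1}(1-\lambda)\,u_\lambda(x)$ (which, thanks to the uniform oscillation bound, does not depend on $x$), or equivalently $\alpha = \inf_u \max_x (Tu(x)-u(x))$ — one would need to show this limit exists, e.g. by a monotonicity/subadditivity argument on $\lambda\mapsto (1-\lambda)\min u_\lambda$.

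The core of the argument is then an equicontinuity estimate. Writing $v_\lambda = u_\lambda + \frac{\alpha}{1-\lambda}$, one checks that $v_\lambda$ satisfies $v_\lambda = T(\lambda v_\lambda) + \alpha + \frac{\alpha\lambda}{1-\lambda} \cdot(\text{correction})$; more cleanly, from $u_\lambda = T(\lambda u_\lambda)$ one gets that $u_\lambda(x) = \min_y(\lambda u_\lambda(y) + c(y,x))$, so the modulus of continuity of $u_\lambda$ in $x$ is inherited from the modulus of continuity of $c(y,\cdot)$ uniformly in $y$ — this gives equicontinuity of the family $\{u_\lambda\}$, hence of $\{v_\lambda\}$ since they differ by constants. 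Combined with the uniform oscillation bound, Arzelà–Ascoli gives precompactness of $\{v_\lambda\}$ in $C^0(X,\R)$. Any subsequential limit $u_1$ satisfies $u_1 = Tu_1 + \alpha$ by passing to the limit in the fixed-point equation (using that $\lambda\to 1$ and $v_\lambda$ converges uniformly, and that $T$ is continuous).

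The main obstacle is \textbf{uniqueness of the limit} (as opposed to mere precompactness): a priori different subsequences could converge to different solutions $u_1$ of $u_1 = Tu_1 + \alpha$, and indeed solutions of this "ergodic" equation need not be unique up to constants. To pin down the limit one must exploit the selection principle implicit in the construction — namely that $u_\lambda$ is the \emph{specific} fixed point of the contraction, which is the "maximal" or "minimal" solution in an appropriate discounted sense. I expect the argument to run: if $u_1$ and $u_1'$ are two subsequential limits, use the contraction property of $T_\lambda$ to compare $u_\lambda$ with $u_1 + \frac{\alpha}{1-\lambda}$ and with $u_1' + \frac{\alpha}{1-\lambda}$ along the respective subsequences, deriving $\|u_\lambda - u_1 - \frac{\alpha}{1-\lambda}\|_\infty \to 0$ in a way that forces $u_1 = u_1'$; alternatively one identifies $u_1$ via a variational formula (an infimum over calibrated curves / a Peierls-barrier-type object) that is manifestly subsequence-independent. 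Once uniqueness of the accumulation point is secured, precompactness upgrades to full convergence, completing the proof.
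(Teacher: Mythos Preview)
The paper does \emph{not} contain a proof of this theorem: it is quoted in the introduction as a result of \cite{DFIZ}, and the author explicitly says that ``the proof of this theorem uses heavily explicit formulas for the functions $u_\lambda$, Aubry--Mather theory and Fathi's weak KAM theory.'' So there is nothing in the present paper to compare your argument against line by line.

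That said, your plan is accurate on the soft part and honest about the hard part. The order-preservation, commutation with constants, the uniform oscillation bound, the equicontinuity inherited from $c(y,\cdot)$, and the Arzel\`a--Ascoli compactness are all correct and standard; they yield precompactness of $v_\lambda = u_\lambda + \alpha/(1-\lambda)$ and show that every accumulation point solves $u_1 = Tu_1 + \alpha$. You also correctly identify that \emph{uniqueness of the limit} is the entire content of the theorem, since the ergodic equation $u = Tu + \alpha$ typically has many solutions not differing by constants.

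Where there is a genuine gap is in your two proposed routes to uniqueness. The first one --- comparing $u_\lambda$ with $u_1 + \alpha/(1-\lambda)$ via the contraction property of $T_\lambda$ --- does not work as stated: $u_1 + \alpha/(1-\lambda)$ is not a fixed point of $T_\lambda$, only an approximate one up to an error of order $(1-\lambda)\,\mathrm{osc}(u_1)$, and feeding that into the contraction inequality produces a bound of size $O(1)$, not $o(1)$; you end up proving precompactness again, not convergence. Your second suggestion --- a variational/Peierls-barrier identification of the limit --- is the right instinct and is exactly what the cited reference does: one needs the Aubry set, Ma\~n\'e's potential / Peierls barrier, and the representation of $u_\lambda$ as a discounted value function to show that the limit is the \emph{particular} weak KAM solution selected by those structures. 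None of that machinery is in your proposal beyond a name-drop, and it is not something one can improvise; this is precisely why the author stresses that Theorem~\ref{DFIZ} is not a corollary of the general Theorem~\ref{fp1} of the present paper.
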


The function $u_1$ is called a weak KAM solution following Fathi and the proof of this theorem uses heavily explicit formulas for the functions $u_\lambda$, Aubry-Mather theory and Fathi's weak KAM theory.

However, the method of approaching the $1$-Lipschitz map $T$ by contractions $T_\lambda$ in order to find fixed points of $T$ can be applied in more general settings. Therefore it is a natural question wether the previous theorem is a particular case of a very general phenomenon or if it relies exclusively on the very special setting.

It turns out the answer is both yes and no.

 On the positive side, we prove the following theorem:
\begin{Th}\label{fp1}
Let $(E,\| \cdot \|)$ be a normed real vector space and  $C\subset E$ be a convex compact subset such that $0\in C$. Let  $f : C\to C$ be a $1$-Lipschitz map. For any $\lambda \in (0,1)$ let us set $x_\lambda$ to be the unique fixed point of the contraction $f_\lambda : x\mapsto f(\lambda x)$, that is the only point of $C$ such that $f(\lambda x_\lambda) = x_\lambda$.

If $\| \cdot \|$ is $C^1$ on $E \setminus \{0\}$, then the family $(x_\lambda)_{\lambda\in (0,1)}$ converges as $\lambda \to 1$.
\end{Th}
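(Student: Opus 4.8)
The plan is to reduce the statement to the uniqueness of the accumulation point of the family $(x_\lambda)$ as $\lambda\to 1$. Since $C$ is compact this family does have accumulation points; moreover, if $\lambda_n\to 1$ and $x_{\lambda_n}\to\bar x$, then passing to the limit in $x_{\lambda_n}=f(\lambda_n x_{\lambda_n})$ and using the continuity of $f$ gives $\bar x=f(\bar x)$, so every accumulation point lies in $\Fix(f):=\{x\in C:\ f(x)=x\}$ (in particular $\Fix(f)\neq\emptyset$, which is the elementary fixed point statement alluded to in the abstract). As a family in a compact metric space admitting a single accumulation point converges to it, it suffices to prove that all accumulation points coincide. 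Before that, I would dispose of the degenerate case $0\in\Fix(f)$: here $\|x_\lambda\|=\|f(\lambda x_\lambda)-f(0)\|\le\|\lambda x_\lambda\|=\lambda\|x_\lambda\|$, hence $x_\lambda=0$ for every $\lambda$ and there is nothing to prove. So assume $0\notin\Fix(f)$.

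The engine of the argument is the estimate, valid for every $x^\ast\in\Fix(f)$ and every $\lambda\in(0,1)$, which follows from $f$ being $1$-Lipschitz together with $f(x^\ast)=x^\ast$:
$$\|x_\lambda-x^\ast\|=\|f(\lambda x_\lambda)-f(x^\ast)\|\le\|\lambda x_\lambda-x^\ast\|=\|(x_\lambda-x^\ast)-(1-\lambda)x_\lambda\|.$$
Writing $\varphi:=\|\cdot\|$, which is $C^1$ away from $0$, I would turn this metric inequality into a linear one. Observe first that $\lambda x_\lambda-x^\ast\neq 0$ (otherwise $x_\lambda=f(\lambda x_\lambda)=f(x^\ast)=x^\ast=\lambda x_\lambda$ would force the excluded $x^\ast=0$), so $\varphi$ is differentiable at $y:=\lambda x_\lambda-x^\ast=(x_\lambda-x^\ast)-(1-\lambda)x_\lambda$. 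By convexity of $\varphi$ one has $\varphi(x_\lambda-x^\ast)\ge\varphi(y)+(1-\lambda)D\varphi(y)(x_\lambda)$, and comparing with $\varphi(x_\lambda-x^\ast)\le\varphi(y)$ yields
$$D\varphi(\lambda x_\lambda-x^\ast)(x_\lambda)\le 0.$$
Now let $\lambda_n\to1$ with $x_{\lambda_n}\to\bar x\in\Fix(f)$ and suppose $\bar x\neq x^\ast$; then $\lambda_n x_{\lambda_n}-x^\ast\to\bar x-x^\ast\neq 0$, and since $D\varphi$ is \emph{continuous} on $E\setminus\{0\}$ with values of norm $1$, passing to the limit gives
$$D\varphi(\bar x-x^\ast)(\bar x)\le 0\qquad\text{for every }x^\ast\in\Fix(f),\ x^\ast\neq\bar x.$$

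It then remains to derive a contradiction from the existence of two distinct accumulation points $p\neq q$, both in $\Fix(f)$. Using the parity $\varphi(-v)=\varphi(v)$, whose derivative is the oddness relation $D\varphi(-v)=-D\varphi(v)$, the two instances of the last inequality (at $p$ with $x^\ast=q$, and at $q$ with $x^\ast=p$) read $D\varphi(p-q)(p)\le0$ and $D\varphi(p-q)(q)\ge0$. Subtracting, and invoking the Euler relation $D\varphi(v)(v)=\varphi(v)$ coming from the positive $1$-homogeneity of $\varphi$,
$$0\ \ge\ D\varphi(p-q)(p)-D\varphi(p-q)(q)\ =\ D\varphi(p-q)(p-q)\ =\ \|p-q\|\ >\ 0,$$
which is absurd. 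Hence $p=q$, the accumulation point is unique, and $(x_\lambda)$ converges as $\lambda\to1$.

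The step I expect to be delicate is the transition from the metric inequality $\|x_\lambda-x^\ast\|\le\|\lambda x_\lambda-x^\ast\|$ to the linear inequality $D\varphi(\lambda x_\lambda-x^\ast)(x_\lambda)\le0$ and, above all, the passage to the limit in the latter: this is exactly where the $C^1$ hypothesis enters, since it guarantees at once that the norm has a single supporting functional at each nonzero point (so that $D\varphi$ is well defined) and that this functional depends continuously on the point (so that the inequality survives the limit). All the remaining ingredients — compactness of $C$, the contraction estimate, Euler's identity, and the parity of the norm — are elementary; the one piece of bookkeeping not to overlook is that $\lambda x_\lambda-x^\ast$ never vanishes once $0\notin\Fix(f)$, which is what legitimizes every derivative written above.
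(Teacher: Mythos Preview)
Your proof is correct and follows essentially the same route as the paper: you derive the key linear inequality $D\varphi(\lambda x_\lambda-x^\ast)(x_\lambda)\le 0$, pass to the limit using the continuity of $D\varphi$ afforded by the $C^1$ hypothesis, and conclude by the same uniqueness argument based on $D\varphi(-v)=-D\varphi(v)$ and $D\varphi(v)(v)=\|v\|$. The only cosmetic difference is that the paper works with $y_\lambda=\lambda x_\lambda$ and packages the first step as a ``monotonicity of $Id-f$'' inequality $\ell_{x-y}\big((x-f(x))-(y-f(y))\big)\ge 0$ (mirroring the pre-Hilbertian case), whereas you obtain the same inequality directly from $\|x_\lambda-x^\ast\|\le\|\lambda x_\lambda-x^\ast\|$ combined with the subgradient inequality for the convex function $\|\cdot\|$.
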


On the negative side, we provide  in the appendix an explicit example of a map in $\R^2$ equipped with the norm $\|\cdot \|_1$ for which the family $(x_\lambda)$ does not converge.

Of course, Theorem \ref{DFIZ} is not a consequence of Theorem \ref{fp1} for many reasons, but the most fundamental one is that the sup norm $\|\cdot \|_\infty$ on $C^0(X,\R)$ is not $C^1$. Indeed, balls for this norm have many corners. 

One may wonder if a result can be obtained for $1$-Lipschitz maps that do not have fixed points. The following theorem was established in \cite{KN} (see the reference for more precise statements and other very interesting results).

\begin{Th}[Kohlberg, Neyman]
Let $E$ be a strictly convex and reflexive Banach space and $f : E\to E$ a $1$-Lipschitz map. For any $\lambda \in (0,1)$ let us set $x_\lambda$ to be the unique fixed point of the contraction $f_\lambda : x\mapsto f(\lambda x)$, then the family $(1-\lambda ) x_\lambda $ weakly converges as $\lambda \to 1$. Moreover, it strongly converges if the dual $E^*$ has Fr\' echet differentiable norm.
\end{Th}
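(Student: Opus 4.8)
The plan is to reduce the statement to the convergence of \emph{displacement vectors} of $f$. Write $v_\lambda = (1-\lambda)x_\lambda$ and $y_\lambda = \lambda x_\lambda$; since $f(y_\lambda)=f(\lambda x_\lambda)=x_\lambda$, one gets the key identity $f(y_\lambda)-y_\lambda = x_\lambda-\lambda x_\lambda = v_\lambda$, so each $v_\lambda$ is a genuine displacement vector of $f$. First I would record boundedness: from $\|x_\lambda\|=\|f(\lambda x_\lambda)\|\le\|f(\lambda x_\lambda)-f(0)\|+\|f(0)\|\le\lambda\|x_\lambda\|+\|f(0)\|$ one reads off $\|v_\lambda\|\le\|f(0)\|$. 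Let $\gamma=\inf_{u\in E}\|f(u)-u\|$ be the minimal displacement; the identity gives $\|v_\lambda\|\ge\gamma$, and an elementary estimate gives the reverse asymptotically. Indeed, for any fixed $u$,
$$\|x_\lambda-f(u)\|=\|f(\lambda x_\lambda)-f(u)\|\le\|\lambda x_\lambda-u\|\le\lambda\|x_\lambda-u\|+(1-\lambda)\|u\|,$$
while $\|x_\lambda-f(u)\|\ge\|x_\lambda-u\|-\|f(u)-u\|$; combining yields $(1-\lambda)\|x_\lambda-u\|\le\|f(u)-u\|+(1-\lambda)\|u\|$, and then $\|v_\lambda\|\le\|f(u)-u\|+2(1-\lambda)\|u\|$. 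Letting $\lambda\to 1$ and optimizing over $u$ shows $\lim_{\lambda\to 1}\|v_\lambda\|=\gamma$.

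The heart of the argument is to identify the limit. Let $D=\{f(u)-u:u\in E\}$ be the displacement set; I would establish that its closure $\overline D$ is convex. This follows from the fact that $A:=I-f$ is $m$-accretive: for every $r>0$ and $z\in E$ the equation $y+r(y-f(y))=z$ has a unique solution, since $y\mapsto (z+rf(y))/(1+r)$ is a contraction, so $I+rA$ is onto; and for $m$-accretive operators the closure of the range is convex (Reich, and Brezis in the Hilbert case). Granting this, $\overline D=-\overline{R(A)}$ is a nonempty closed convex set. Since $E$ is reflexive and strictly convex, $\overline D$ has a unique element $w$ of minimal norm, with $\|w\|=\gamma$: existence comes from reflexivity and weak lower semicontinuity of the norm, uniqueness from strict convexity, as two distinct minimizers would have a midpoint in $\overline D$ of strictly smaller norm.

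Weak convergence of $(v_\lambda)$ to $w$ then follows by compactness plus uniqueness. By reflexivity the bounded family $(v_\lambda)$ is relatively weakly compact, so it suffices to show $w$ is its only weak limit point. If $v_{\lambda_n}\rightharpoonup w'$ along a sequence $\lambda_n\to 1$, then $w'\in\overline D$ because $\overline D$, being convex and norm-closed, is weakly closed; hence $\|w'\|\ge\gamma$. On the other hand weak lower semicontinuity of the norm together with $\|v_{\lambda_n}\|\to\gamma$ gives $\|w'\|\le\gamma$. Thus $w'$ is a minimal-norm element of $\overline D$, so $w'=w$ by uniqueness. Hence $v_\lambda\rightharpoonup w$, i.e. $(1-\lambda)x_\lambda$ converges weakly.

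For strong convergence I would invoke the Šmulian characterization of Fréchet differentiability. Assume the norm of $E^*$ is Fréchet differentiable; I may take $w\neq 0$, since otherwise $\|v_\lambda\|\to 0$ already forces strong convergence. Let $\varphi\in E^*$ norm $w$, that is $\|\varphi\|=1$ and $\langle\varphi,w\rangle=\|w\|$, and set $e_n=v_{\lambda_n}/\|v_{\lambda_n}\|$ for an arbitrary sequence $\lambda_n\to 1$. From $v_{\lambda_n}\rightharpoonup w$ and $\|v_{\lambda_n}\|\to\|w\|$ we get $\langle\varphi,e_n\rangle\to 1=\|\varphi\|$ with $\|e_n\|=1$; viewing the $e_n$ as elements of $E=E^{**}$, Fréchet differentiability of the norm of $E^*$ at $\varphi$ forces $(e_n)$ to be norm-convergent, necessarily to $w/\|w\|$, whence $v_{\lambda_n}=\|v_{\lambda_n}\|e_n\to w$ in norm. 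The main obstacle in this program is the convexity of $\overline D$: the elementary steps deliver boundedness, the limiting value $\gamma$ of the norms, and the reduction to uniqueness of a minimal-norm element, but they do not localize the weak limit points inside a convex set. Establishing the convexity of the range closure of the $m$-accretive operator $I-f$ — equivalently, realizing every convex combination of displacement vectors as a limit of displacements — is the genuinely nontrivial input, and that is where I expect the full accretive/nonexpansive structure to be needed.
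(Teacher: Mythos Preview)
The paper does \emph{not} prove this theorem. The Kohlberg--Neyman result is stated in the introduction as background and attributed to the reference \cite{KN}; the body of the paper only treats the compact case (Theorems \ref{fixe1}, \ref{fp2}, \ref{fp3}) and the bounded Hilbert case (Theorem \ref{hilbert}), all of which concern convergence of $x_\lambda$ itself rather than $(1-\lambda)x_\lambda$. So there is nothing in the paper to compare your proposal against.

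On the merits of your outline: the reduction $v_\lambda=(1-\lambda)x_\lambda=f(y_\lambda)-y_\lambda$, the bound $\|v_\lambda\|\le\|f(0)\|$, and the computation $\lim_{\lambda\to1}\|v_\lambda\|=\gamma$ are all correct and standard. Your endgame --- weak cluster points lie in a weakly closed convex set and have minimal norm there, hence coincide by strict convexity; then Fr\'echet differentiability of the dual norm upgrades weak$+$norm convergence to strong convergence via \v{S}mulian --- is exactly the right shape.

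The one point that deserves scrutiny is the step you yourself flag: convexity of $\overline{D}=\overline{R(I-f)}$. You invoke Reich's theorem on the convexity of the range closure of an $m$-accretive operator, but the versions of that result I know require geometric hypotheses (uniform smoothness, or a uniformly G\^ateaux differentiable norm) that are not implied by ``strictly convex and reflexive'' alone. The original Kohlberg--Neyman argument sidesteps this by working instead with the asymptotic direction of the iterates $f^n(x)/n$ and a more hands-on variational characterization of the limit, rather than appealing to operator-theoretic convexity of the full displacement set. So your program is sound in spirit, but if you want it to go through under exactly the stated hypotheses you should either locate a version of the range-convexity theorem valid in all strictly convex reflexive spaces, or replace that step by the direct argument from \cite{KN}.
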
 

\subsection*{Organization of the paper} In the first part, we start with the case of a pre-Hilbertian norm, the results being more precise in this case and the proof making use of more classical results.

In the second part, we will prove Theorem \ref{fp1}.

In the last part, we will discuss a generalization where the compactness hypothesis is dropped, in infinite dimensional Hilbert spaces.

Finally, in the appendix, we develop two examples in $\R^2$, one for which the conclusions of Theorem \ref{fp1} are not verified and one where the set of fixed points of $f$ is not convex.

\subsection*{Acknowledgement}
The author warmly thanks Patrice Le Calvez for his help in finding the example in Appendix \ref{A}, and Albert Fathi and Gilles Godefroy for their insight on the infinite dimensional aspects and Julien Grivaux for  his contribution in the concluding remark. He also thanks S\' ebastien Gou\" ezel for bringing the reference \cite{KN} to his attention.

\section{The pre-Hilbertian  case}\label{euclide}

In this section, the norm  $\| \cdot \|$ is obtained from a scalar product $\langle \cdot , \cdot \rangle$. We keep the previous notations: $C\subset E$ is a convex compact subset such that $0\in C$ and $f : C\to C$ is a $1$-Lipschitz map. If $\lambda \in (0,1)$, $x_\lambda \in C$ is the only vector verifying the relation $f(\lambda x_\lambda) = x_\lambda$. Moreover, we set $y_\lambda = \lambda x_\lambda$. It is the only vector verifying that 
\begin{equation}\label{lambda}
\lambda f(y_\lambda) = y_\lambda.
\end{equation}
 It is clear that  the convergence of the family $(x_\lambda)_{\lambda \in (0,1)}$  is equivalent to that of $(y_\lambda)_{\lambda \in (0,1)}$, as $\lambda \to 1$, and that if they converge, they have the same limit. We start by three lemmas that shed light on the situation:
 
 \begin{lm}
 The set $\Fix(f)$ of fixed points of $f$ is a compact, nonempty and convex set.
 \end{lm}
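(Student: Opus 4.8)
The plan is to establish the three asserted properties in turn, using only the hypotheses that $C$ is convex and compact, $0\in C$, and $f:C\to C$ is $1$-Lipschitz (hence continuous).

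First, nonemptiness: this is precisely the Brouwer/Schauder consequence mentioned in the abstract, but since the paper's whole point is to prove it by contraction approximation, I would instead argue via the family $(x_\lambda)$. For each $\lambda\in(0,1)$ we have $f(\lambda x_\lambda)=x_\lambda$ with $x_\lambda\in C$. Pick a sequence $\lambda_n\to 1$; by compactness of $C$ a subsequence of $(x_{\lambda_n})$ converges to some $x_*\in C$. Since $\lambda_n x_{\lambda_n}\to x_*$ as well and $f$ is continuous, passing to the limit in $f(\lambda_n x_{\lambda_n})=x_{\lambda_n}$ gives $f(x_*)=x_*$, so $\Fix(f)\neq\emptyset$.

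Second, compactness: $\Fix(f)=\{x\in C: f(x)-x=0\}$ is the preimage of $\{0\}$ under the continuous map $x\mapsto f(x)-x$, hence closed in $C$; being a closed subset of the compact set $C$, it is compact.

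Third, convexity: take $a,b\in\Fix(f)$ and $t\in[0,1]$, and set $z=ta+(1-t)b\in C$ (using convexity of $C$). Using that $f$ is $1$-Lipschitz, $\|f(z)-a\|=\|f(z)-f(a)\|\le\|z-a\|=(1-t)\|a-b\|$ and similarly $\|f(z)-b\|\le t\|a-b\|$. Adding these and using the triangle inequality, $\|a-b\|\le\|f(z)-a\|+\|f(z)-b\|\le\|a-b\|$, so both inequalities are equalities; in particular the triangle inequality is an equality, which forces $f(z)$ to lie on the segment $[a,b]$, and the equality $\|f(z)-a\|=(1-t)\|a-b\|$ then pins down $f(z)=z$. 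Hence $z\in\Fix(f)$.

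The only genuinely delicate point is the convexity argument, where one must be careful that equality in the triangle inequality $\|u\|+\|v\|\ge\|u+v\|$ (here with $u=f(z)-a$, $v=b-f(z)$) does indeed place $f(z)$ on the segment $[a,b]$ — this uses only that the norm satisfies the triangle inequality with equality precisely for nonnegatively proportional vectors (when neither is zero), and the degenerate cases $f(z)=a$ or $f(z)=b$ are handled directly. No strict convexity or smoothness of the norm is needed for this lemma. (I note that convexity of $\Fix(f)$ can fail for maps that are merely nonexpansive in a different metric, which is why the appendix's second example is relevant, but here the linear-space norm structure makes it automatic.)
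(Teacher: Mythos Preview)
Your arguments for nonemptiness and compactness match the paper's. The convexity argument, however, has a genuine gap.

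You write that equality in the triangle inequality $\|u\|+\|v\|=\|u+v\|$ forces $u$ and $v$ to be nonnegatively proportional, and then assert that ``no strict convexity or smoothness of the norm is needed.'' But that implication \emph{is} the definition of a strictly convex norm; it fails for general norms. For instance, in $(\R^2,\|\cdot\|_1)$ with $u=(1,0)$ and $v=(0,1)$ we have $\|u\|+\|v\|=2=\|u+v\|$ with $u,v$ not proportional. So from $\|f(z)-a\|+\|f(z)-b\|=\|a-b\|$ alone you cannot place $f(z)$ on the segment $[a,b]$ without strict convexity.

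The paper is explicit about this: its proof says ``the fact that it is convex is true as soon as balls are strictly convex (which is the case for pre-Hilbertian spaces),'' and the lemma is stated in the pre-Hilbertian section precisely for this reason. The paper's argument is equivalent to yours once strict convexity is granted: $f(z)$ lies in $\overline{B(a,(1-t)\|a-b\|)}\cap\overline{B(b,t\|a-b\|)}$, and strict convexity makes this intersection the single point $\{z\}$. Your closing remark also misreads Appendix~B: that example \emph{is} a $1$-Lipschitz self-map for a norm on $\R^2$ (namely $\|\cdot\|_\infty$, later smoothed), and its fixed-point set is a non-convex graph---exactly what refutes the claim that ``the linear-space norm structure makes it automatic.''
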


\begin{proof}
It is clearly compact by continuity of $f$  and nonempty as any accumulation point of $(x_\lambda)_{\lambda \in (0,1)}$ as $\lambda \to 1$ is a fixed point of $f$. The fact that it is convex is true as soon as balls are strictly convex (which is the case for pre-Hilbertian spaces). Indeed, let $x$ and $y$ be fixed points of $f$ and $t\in (0,1)$. Setting $z = tx +(1-t)y$,  then the $1$-Lipschitz property implies that $f(z)$ must lie  in both closed balls $\overline{B(x , (1-t)\|y-x\|)}$ and  $\overline{B(y , t\|y-x\|)}$. As their intersection is $\{z\}$ we get $f(z)=z$.
\end{proof}
 The next lemma appears in \cite{Browder} and is crucial to this paper.
\begin{lm}\label{monotone}
The function $Id - f : C\to E$ is monotone meaning that 
$$\forall (x,y)\in E \times E,\quad \big\langle \big(x-f(x) \big) -\big(y-f(y)\big), x-y\big\rangle \geqslant 0.$$
\end{lm}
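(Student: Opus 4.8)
The plan is to expand the scalar product and reduce everything to the Cauchy--Schwarz inequality together with the nonexpansiveness of $f$. Writing the quantity to be estimated in the form
$$\big\langle \big(x-f(x)\big) - \big(y-f(y)\big),\, x-y \big\rangle = \|x-y\|^2 - \big\langle f(x)-f(y),\, x-y\big\rangle,$$
it suffices to bound the remaining scalar product from above by $\|x-y\|^2$.

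First I would apply Cauchy--Schwarz: $\big\langle f(x)-f(y),\, x-y\big\rangle \leqslant \|f(x)-f(y)\|\,\|x-y\|$. Then, since $f$ is $1$-Lipschitz, $\|f(x)-f(y)\| \leqslant \|x-y\|$, so the product is at most $\|x-y\|^2$. Substituting back, the expression above is $\geqslant \|x-y\|^2 - \|x-y\|^2 = 0$, which is exactly the claimed monotonicity. (Strictly speaking the statement should read $\forall (x,y)\in C\times C$, since $f$ is only defined on $C$; no extension of $f$ to $E$ is needed.)

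There is essentially no obstacle here: the only ingredients are the bilinearity of the scalar product, Cauchy--Schwarz, and the defining inequality of a $1$-Lipschitz map; this is why the lemma is specific to the pre-Hilbertian setting. I would also record the equality case, as it will be useful later: tracing through the two inequalities, one has $\big\langle (x-f(x)) - (y-f(y)),\, x-y\big\rangle = 0$ precisely when $\|f(x)-f(y)\| = \|x-y\|$ and $f(x)-f(y)$ is a nonnegative multiple of $x-y$, i.e. exactly when $f(x)-f(y) = x-y$. Thus $Id-f$ is, in a suitable sense, strictly monotone away from the pairs on which $f$ acts as a pure translation.
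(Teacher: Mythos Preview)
Your argument is correct and coincides exactly with the paper's proof: expand the scalar product, apply Cauchy--Schwarz, and then use the $1$-Lipschitz bound. Your remark that the quantifier should read $(x,y)\in C\times C$ is well taken, and your equality-case analysis is a correct bonus not present in the paper.
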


\begin{proof}
The proof follows from a simple computation:
\begin{multline*}
 \big\langle \big(x-f(x) \big) -\big(y-f(y)\big), x-y\big\rangle =\\
 = \|x-y\|^2 -\langle f(x)-f(y) , x-y \rangle \\
 \geqslant  \|x-y\|^2 - \|f(x)-f(y)\| \cdot \|x-y\| \geqslant 0,
\end{multline*}
where we used the Cauchy-Schwarz inequality first and then the hypothesis that $f$ is $1$-Lipschitz.
\end{proof}

This next lemma will be used in section \ref{Hilbert}. However, we state it now as we believe it is of independent interest.

\begin{lm}
The function $\lambda \mapsto \|y_\lambda\|$ is constant if $0\in \Fix(f)$ and increasing otherwise.
\end{lm}

\begin{proof}
If $0\in \Fix(f)$ then clearly, $y_\lambda = 0$ for all $\lambda$.

Otherwise, $0$ does not verify \eqref{lambda}, except for $\lambda = 0$. It follows that $y_\lambda \neq 0$ for $\lambda \neq 0$ and that the map $\lambda \mapsto y_\lambda$ is injective. Let $0<\lambda <\lambda' < 1$. We apply the previous lemma to $y_\lambda$ and $y_{\lambda'}$. Using \eqref{lambda}, we obtain that 
$$\big \langle \big(y_{\lambda'} - \frac{1}{\lambda'} y_{\lambda'}\big) - \big(y_{\lambda} - \frac{1}{\lambda} y_{\lambda}\big), y_{\lambda'}-y_{\lambda}  \big \rangle= \big(1-\frac{1}{\lambda'}\big)\|y_{\lambda'}-y_{\lambda} \|^2+\big(\frac{1}{\lambda} -\frac{1}{\lambda'}\big)\langle y_\lambda , y_{\lambda'} - y_{\lambda}\rangle\geqslant 0.$$
Hence, $\langle y_\lambda , y_{\lambda'} - y_{\lambda}\rangle > 0$. Similarly, 
$$\big\langle \big(y_{\lambda'} - \frac{1}{\lambda'} y_{\lambda'}\big) - \big(y_{\lambda} - \frac{1}{\lambda} y_{\lambda}\big), y_{\lambda'}-y_{\lambda}\big\rangle= \big(1-\frac{1}{\lambda}\big)\|y_{\lambda'}-y_{\lambda} \|^2+\big(\frac{1}{\lambda} -\frac{1}{\lambda'}\big)\langle y_{\lambda'} , y_{\lambda'} - y_{\lambda}\rangle\geqslant 0.$$
Hence $\langle y_{\lambda'} , y_{\lambda'} - y_{\lambda}\rangle > 0$. Finally, by summing those two inequalities, one gets
$$\langle y_{\lambda'}+y_\lambda , y_{\lambda'} - y_{\lambda}\rangle= \|y_{\lambda'}\|^2 - \|y_\lambda \|^2 > 0.$$

\end{proof}

We finally state and prove the theorem in this case.

\begin{Th}\label{fixe1}
If $x^*\in \Fix(f)$ is the orthogonal projection of $0$ on $\Fix(f)$, then $\lim\limits_{\lambda \to 1} y_\lambda = x^*$.
\end{Th}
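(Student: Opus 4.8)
The plan is to show that every accumulation point of $(y_\lambda)_{\lambda\to 1}$ is a fixed point of $f$ (this is already essentially noted in the first lemma), and then to use monotonicity of $\mathrm{Id}-f$ to pin down \emph{which} fixed point it must be. Since $C$ is compact, it suffices to prove that any limit point $y^*$ of a sequence $y_{\lambda_n}$ with $\lambda_n\to 1$ equals $x^*$, the orthogonal projection of $0$ onto $\mathrm{Fix}(f)$. Passing to the limit in \eqref{lambda} gives $f(y^*)=y^*$, so $y^*\in\mathrm{Fix}(f)$.

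The key computation is to apply Lemma \ref{monotone} to the pair $(y_\lambda, z)$ where $z$ is an arbitrary fixed point of $f$. Since $z-f(z)=0$, monotonicity gives
$$\big\langle y_\lambda - f(y_\lambda), y_\lambda - z\big\rangle \geqslant 0,$$
and using $f(y_\lambda)=\tfrac1\lambda y_\lambda$ from \eqref{lambda} this becomes
$$\Big(1-\tfrac1\lambda\Big)\big\langle y_\lambda, y_\lambda - z\big\rangle \geqslant 0,$$
hence, since $1-\tfrac1\lambda<0$ for $\lambda\in(0,1)$,
$$\langle y_\lambda, y_\lambda - z\rangle \leqslant 0, \qquad\text{i.e.}\qquad \|y_\lambda\|^2 \leqslant \langle y_\lambda, z\rangle \quad \text{for all } z\in\mathrm{Fix}(f).$$
Now let $y_{\lambda_n}\to y^*$; passing to the limit, $\|y^*\|^2 \leqslant \langle y^*, z\rangle$ for every $z\in\mathrm{Fix}(f)$. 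Rearranging, $\langle y^* - z, y^*\rangle \leqslant 0$, which is precisely the variational characterization of $y^*$ being the orthogonal projection of $0$ onto the convex set $\mathrm{Fix}(f)$ (recall $\mathrm{Fix}(f)$ is convex by the first lemma). By uniqueness of that projection, $y^* = x^*$.

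Since every accumulation point of the relatively compact family $(y_\lambda)$ equals $x^*$, the family converges to $x^*$, and by the earlier remark $x_\lambda$ converges to the same limit. I do not expect a genuine obstacle here: the only points requiring care are the sign of $1-\tfrac1\lambda$ and the correct form of the projection inequality $\langle 0 - x^*, z - x^*\rangle \leqslant 0$ for all $z$ in the convex target set — which is exactly what we derived — together with the fact that $\mathrm{Fix}(f)\neq\varnothing$, guaranteed by the first lemma.
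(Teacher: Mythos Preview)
Your proof is correct and follows essentially the same argument as the paper: apply Lemma~\ref{monotone} to the pair $(y_\lambda, z)$ with $z\in\Fix(f)$, use \eqref{lambda} to obtain $\langle y_\lambda, y_\lambda - z\rangle \leqslant 0$, pass to the limit along a convergent subsequence, and conclude via the variational characterization of the orthogonal projection together with compactness of $C$. The only difference is cosmetic---you spell out the rearrangement $\|y_\lambda\|^2 \leqslant \langle y_\lambda, z\rangle$ and name the projection inequality explicitly, whereas the paper states $\langle z, z-y\rangle \leqslant 0$ directly.
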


\begin{proof}
Once again, we apply Lemma \ref{monotone} to $x=y_\lambda$ for some $\lambda \in (0,1)$ and $y\in \Fix(f)$. We obtain that $\big(1-\frac{1}{\lambda}\big)\langle y_\lambda , y_\lambda - y \rangle \geqslant 0$ and consequently $\langle y_\lambda , y_\lambda - y \rangle \leqslant 0$. If $\lambda_n\to 1$ is a sequence such that $(y_{\lambda_n})_{n\in \mathbb N}$ converges to some point $z$, then $z\in \Fix(f)$ and passing to the limit we find that 
$$\forall y\in \Fix(f),\quad  \langle z,   z-y\rangle \leqslant 0. $$
It follows that $z=x^*$ and by compactness of $C$, the result is proved.
\end{proof}

\section{The  general compact case}

\subsection{For $C^1$ norms}
In this section, we assume that $\| \cdot \|$ is a norm on $E$ that is $C^1$ on $E \setminus \{ 0 \}$. It follows (and both facts are actually equivalent) that for all $x\in E$ there exists a unique linear form $\ell_x$ such that $\|\ell_x\| = 1$ and $\ell_x(x) = \|x\|$. Note that we use the same notation for the norm on $E$ and the norm induced on its dual $E^*$. The linear form $\ell_x$ is the differential of $\|\cdot \|$ at $x$. Its kernel is the direction of the tangent hyperplane to the sphere of radius $\|x\|$ at $x$. It follows that the map $x\mapsto \ell_x$ is continuous. Let us finally stress that for all $x\neq 0$, $\ell_{-x} = -\ell_x$.

Let us now state and prove the main result of this section.

\begin{Th}\label{fp2}
Let $(E,\| \cdot \|)$ be a normed vector space such that $\| \cdot \|$ is $C^1$ on $E \setminus \{0\}$. Let $C\subset E$ be a convex compact subset such that $0\in C$. Let $f : C\to C$ be a $1$-Lipschitz map. For any $\lambda \in (0,1)$ let us set $y_\lambda$  the unique fixed point of the contraction $\lambda f$, that is the only point of $C$ such that $\lambda f(y_\lambda) = y_\lambda$.

Then the family $(y_\lambda)_{\lambda\in (0,1)}$ converges as $\lambda \to 1$.
\end{Th}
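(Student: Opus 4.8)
The plan is to follow the blueprint of the pre-Hilbertian case (Theorem~\ref{fixe1}), substituting for the scalar product the duality pairing $v \mapsto \ell_v$ given by the differential of the norm. In the Euclidean argument the two ingredients were the monotonicity of $\mathrm{Id}-f$ (Lemma~\ref{monotone}, itself a consequence of Cauchy--Schwarz) and the characterization of the limit as the point of $\Fix(f)$ closest to $0$. Both have natural counterparts: the inequality $\ell_x(v) \leqslant \|\ell_x\|\,\|v\| = \|v\|$ replaces Cauchy--Schwarz, and the relation $\ell_{-v} = -\ell_v$ replaces the symmetry of the scalar product.

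First I would establish a monotonicity statement for $\mathrm{Id}-f$: for $x \neq y$ in $C$,
$$\ell_{x-y}\big( (x - f(x)) - (y - f(y)) \big) = \|x-y\| - \ell_{x-y}\big(f(x)-f(y)\big) \geqslant \|x-y\| - \|f(x)-f(y)\| \geqslant 0,$$
using $\ell_{x-y}(x-y) = \|x-y\|$, $\|\ell_{x-y}\| = 1$, and that $f$ is $1$-Lipschitz. Applying this with $x = y_\lambda$ and $y = y^* \in \Fix(f)$, $y^*\neq y_\lambda$, and using $f(y^*) = y^*$ together with $y_\lambda - f(y_\lambda) = \tfrac{\lambda - 1}{\lambda}\, y_\lambda$ and $\tfrac{\lambda-1}{\lambda} < 0$, one gets
$$\ell_{y_\lambda - y^*}(y_\lambda) \leqslant 0 \qquad \text{for every } y^* \in \Fix(f)\setminus\{y_\lambda\}.$$

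Next I would pass to the limit along a sequence $\lambda_n \to 1$ with $y_{\lambda_n} \to z$ (such sequences exist by compactness of $C$). Continuity of $f$ and the relation $\lambda_n f(y_{\lambda_n}) = y_{\lambda_n}$ give $z \in \Fix(f)$. For any $y^* \in \Fix(f)$ with $y^* \neq z$ one has $y_{\lambda_n} - y^* \to z - y^* \neq 0$, so, since $v \mapsto \ell_v$ is continuous on $E \setminus \{0\}$ and $\|\ell_v\| \equiv 1$, the displayed inequality passes to the limit and yields $\ell_{z - y^*}(z) \leqslant 0$ for every $y^* \in \Fix(f)\setminus\{z\}$ — the exact analogue of the condition $\langle z, z-y\rangle \leqslant 0$ in the proof of Theorem~\ref{fixe1}. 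This determines $z$ uniquely: if $z_1 \neq z_2$ were two accumulation points, then $\ell_{z_1 - z_2}(z_1) \leqslant 0$ and $\ell_{z_2 - z_1}(z_2) \leqslant 0$; since $\ell_{z_2-z_1} = -\ell_{z_1-z_2}$, subtracting gives $\|z_1 - z_2\| = \ell_{z_1 - z_2}(z_1 - z_2) \leqslant 0$, a contradiction. Hence $(y_\lambda)_{\lambda\in(0,1)}$ has a single accumulation point and, $C$ being compact, converges.

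The step I expect to be the crux is the use of $\ell_{-v} = -\ell_v$ in the uniqueness argument: this oddness of the duality map is precisely what the $C^1$ hypothesis buys, and without it (e.g.\ for a merely strictly convex norm the supporting functionals at $v$ and $-v$ need not be antipodal, or may fail to be unique) the subtraction step collapses — consistent with the $\|\cdot\|_1$ counterexample of the appendix. A secondary, purely technical point is that $v \mapsto \ell_v$ is only continuous away from $0$, which forces the case $y^* = z$ to be excluded throughout; this is harmless, the desired inequality being vacuous there.
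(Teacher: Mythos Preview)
Your proof is correct and follows the paper's argument essentially line for line: the monotonicity inequality $\ell_{x-y}\big((x-f(x))-(y-f(y))\big)\geqslant 0$, its specialization to $x=y_\lambda$ and $y\in\Fix(f)$ giving $\ell_{y_\lambda-y}(y_\lambda)\leqslant 0$, passage to the limit via continuity of $v\mapsto\ell_v$, and the uniqueness step using $\ell_{-v}=-\ell_v$. One small correction to your closing commentary: the oddness $\ell_{-v}=-\ell_v$ holds for any Gateaux-smooth norm (by the very characterization of $\ell_v$), so what the $C^1$ hypothesis really buys---and what the paper later relaxes in Theorem~\ref{fp3}---is the \emph{continuity} of $v\mapsto\ell_v$ used when passing to the limit.
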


Before entering the proof, let us stress that with those hypotheses, $\Fix(f)$ is not necessarily convex. The reader will see strong resemblance with the proof of Theorem \ref{fixe1}. The characterization we find of the limit has however a less clear geometric interpretation.

\begin{proof}
If $0\in \Fix(f)$ then $y_\lambda = 0$ for all $\lambda\in (0,1)$ and the result is straightforward.

In the remaining case, $y_\lambda \neq 0$ and the map $\lambda \mapsto y_\lambda $ is injective.
The first step is to mimic the monotonicity of $Id-f$. Let $x\neq y$ we compute that
\begin{multline*}
\ell_{x-y} \big( (x-f(x) ) -(y-f(y))\big) \\
= \ell_{x-y}(x-y) - \ell_{x-y}\big(f(x)-f(y)\big) \\
\geqslant \|x-y\|  -\|f(x) - f(y)\| \geqslant 0,
\end{multline*}
where we have used that $\|\ell_{x-y}\| = 1$ and then that $f$ is $1$-Lipschitz.

We then specialize the previous inequality to $x=y_\lambda$ for some $\lambda \in (0,1)$ and $y\in \Fix(f)$ to obtain $\big(1-\frac{1}{\lambda}\big)\ell_{y_\lambda - y}(y_\lambda) \geqslant 0$ and finally
\begin{equation}\label{crucial}
\forall y\in \Fix(f) , \quad \ell_{y_\lambda - y}(y_\lambda) \leqslant 0.
\end{equation}
Taking the limit in the previous inequality, we find that if $\bar y$ is a limit of a sequence $(y_{\lambda_n})_{n\in \mathbb N}$ with $\lambda_n\to 1$ then
$$\forall y\in \Fix(f) \setminus\{\bar y\},\quad \ell_{\bar y- y}(\bar y) \leqslant 0.$$
To conclude, we notice that there can be at most one point verifying the previous property. Indeed, assume by contradiction that for some $z \in \Fix(f)$, 
$$\forall y\in \Fix(f) \setminus\{z\},\quad \ell_{z- y}(z) \leqslant 0.$$
We discover that 
$$ \ell_{\bar y-z}(\bar y) \leqslant 0,$$
$$\ell_{z- \bar y}(z) = -  \ell_{\bar y-z}(z) \leqslant 0,$$
and summing those two inequalities
$$ \ell_{\bar y-z}(\bar y-z) = -\|\bar y - z\| \leqslant 0.$$
This is a contradiction.
\end{proof}

\subsection{For Gateaux-differentiable norms}

The hypotheses of the previous theorem  can be relaxed. First let us recall some definitions.
\begin{df}
A function $F : E \to \R$ is said to be Gateaux-differentiable at $x\in E$ if there exists a linear form $L\in E^*$ such that 
$$\forall v\in E,\quad \lim_{t\to 0} \frac{1}{t} \big( F(x+tv)-F(x)\big) = L(v).$$ 
\end{df}
We then apply this definition to the norm:
\begin{df}
A norm $\| \cdot \|$ is said to be smooth if it is Gateaux-differentiable on $  E\setminus \{0\}$.
\end{df}
If $\|\cdot \|$ is differentiable, if $x\neq 0$, we still denote by $\ell_x$ the Gateaux-differential of $\| \cdot \|$ at $x$. It can be proved that $\ell_x\in E^*$ is the only continuous linear form such that $\| \ell_x\| = 1$ and $\ell_x(x) = \|x\|$.  

\begin{lm}\label{weak}
Let $(x_n)_{n\in \mathbb N}$ be a sequence of non zero vectors such that $x_n\to x$ and $\ell_{x_n} \rightharpoonup \ell$. Then $\ell = \ell_x$.
\end{lm}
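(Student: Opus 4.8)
The plan is to exploit the characterization recalled just above the statement: $\ell_x$ is the \emph{only} continuous linear form with $\|\ell_x\| = 1$ and $\ell_x(x) = \|x\|$. So it is enough to check that the limit $\ell$ enjoys these two properties, and uniqueness will then give $\ell = \ell_x$. (Note that for $\ell_x$ to make sense we need $x \neq 0$; this is consistent here since $x_n \to x$ forces $\|x_n\| \to \|x\|$, and we are assuming $\ell_x$ is defined.) Throughout I would only use that $\ell_{x_n}(v) \to \ell(v)$ for every $v \in E$, which holds under weak or weak-$*$ convergence of $(\ell_{x_n})$.

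First I would evaluate $\ell$ at the point $x$. For each $n$, write
$$\ell_{x_n}(x) = \ell_{x_n}(x_n) + \ell_{x_n}(x - x_n) = \|x_n\| + \ell_{x_n}(x - x_n).$$
Since $\|\ell_{x_n}\| = 1$, the last term is bounded in absolute value by $\|x - x_n\|$, which tends to $0$; and $\|x_n\| \to \|x\|$ by continuity of the norm. Passing to the limit, and using $\ell_{x_n}(x) \to \ell(x)$, we obtain $\ell(x) = \|x\|$.

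It then remains to show $\|\ell\| = 1$. For the upper bound, if $\|v\| \leqslant 1$ then $|\ell(v)| = \lim_n |\ell_{x_n}(v)| \leqslant \sup_n \|\ell_{x_n}\| = 1$, hence $\|\ell\| \leqslant 1$; this is just weak-$*$ lower semicontinuity of the dual norm. For the lower bound, the identity $\ell(x) = \|x\|$ together with $x \neq 0$ forces $\|\ell\| \geqslant 1$. Thus $\|\ell\| = 1$, so $\ell$ is a continuous linear form of norm $1$ with $\ell(x) = \|x\|$, and the uniqueness statement yields $\ell = \ell_x$. The argument is a soft semicontinuity computation; the only point needing care is matching the cheap inequality $\|\ell\| \leqslant 1$ with $\|\ell\| \geqslant 1$, which is exactly where $x \neq 0$ and the identity $\ell(x) = \|x\|$ come in, so there is no serious obstacle.
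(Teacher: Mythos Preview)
Your proof is correct and follows essentially the same approach as the paper: show $\|\ell\|\leqslant 1$ by lower semicontinuity of the norm under weak convergence, then use the decomposition $\ell_{x_n}(x)=\|x_n\|+\ell_{x_n}(x-x_n)$ to deduce $\ell(x)=\|x\|$, and conclude by the uniqueness characterization of $\ell_x$. Your version is slightly more explicit about the inequality $\|\ell\|\geqslant 1$ and the role of $x\neq 0$, but the argument is the same.
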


\begin{proof}
By properties of weak limits, $\| \ell \| \leqslant \liminf \| \ell_{x_n}\| = 1$. Moreover, one computes that 
$$\|x_n\| = \ell_{x_n}(x_n-x)+\ell_{x_n}(x).$$
As   $|\ell_{x_n}(x_n-x)|\leqslant \|x-x_n\|$ and $\ell_{x_n}(x)\to \ell(x)$, it follows that $\|x\| = \lim \|x_n\| = \lim \ell_{x_n}(x_n) = \ell (x)$. Hence the result.
\end{proof}

\begin{Th}\label{fp3}
Let $(E,\| \cdot \|)$ be a normed vector space such that $\| \cdot \|$ is smooth. Let $C\subset E$ be a convex compact subset such that $0\in C$. Let $f : C\to C$ be a $1$-Lipschitz map. For any $\lambda \in (0,1)$ let us set $y_\lambda$  the unique fixed point of the contraction $\lambda f$, that is the only point of $C$ such that $\lambda f(y_\lambda) = y_\lambda$.

Then the family $(y_\lambda)_{\lambda\in (0,1)}$ converges as $\lambda \to 1$.
\end{Th}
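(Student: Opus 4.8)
The plan is to retrace the proof of Theorem~\ref{fp2}, the only difference being that its single use of the continuity of $x\mapsto\ell_x$ (available for $C^1$ norms but not for merely smooth ones) is replaced by an argument combining Lemma~\ref{weak} with the Banach--Alaoglu theorem. As before, if $0\in\Fix(f)$ then $y_\lambda=0$ for all $\lambda$ and there is nothing to prove; otherwise $y_\lambda\notin\Fix(f)$, and in particular $y_\lambda\neq 0$, for every $\lambda\in(0,1)$, since $\lambda f(y_\lambda)=y_\lambda$ together with $f(y_\lambda)=y_\lambda$ would force $y_\lambda=0$ and hence $0\in\Fix(f)$.

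First I would reproduce the ``monotonicity'' estimate from the proof of Theorem~\ref{fp2}: it only uses $\|\ell_{x-y}\|=1$ and $\ell_{x-y}(x-y)=\|x-y\|$, both of which hold for the Gateaux differential, so that for $x\neq y$ in $C$,
$$\ell_{x-y}\big((x-f(x))-(y-f(y))\big)\geqslant\|x-y\|-\|f(x)-f(y)\|\geqslant 0.$$
Specializing to $x=y_\lambda$ and $y\in\Fix(f)$, and using $\lambda f(y_\lambda)=y_\lambda$ once more, yields $\big(1-\frac{1}{\lambda}\big)\ell_{y_\lambda-y}(y_\lambda)\geqslant 0$, that is, exactly as in~\eqref{crucial},
$$\forall y\in\Fix(f),\qquad\ell_{y_\lambda-y}(y_\lambda)\leqslant 0.$$

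The main step is to pass to the limit as $\lambda\to 1$ in this inequality. Since $C$ is compact it suffices to show that $(y_\lambda)$ has a unique accumulation point as $\lambda\to 1$. Let $\bar y$ be such a point, say $y_{\lambda_n}\to\bar y$ with $\lambda_n\to 1$, and fix $y\in\Fix(f)\setminus\{\bar y\}$, so that $y_{\lambda_n}-y\to\bar y-y\neq 0$ and $\ell_{y_{\lambda_n}-y}$ is defined for large $n$. To extract a limit of these norm-one functionals I would restrict everything to the separable closed subspace $F=\overline{\operatorname{span}}\,C$: the restricted norm is again smooth, its differential at $x\in F\setminus\{0\}$ being $\ell_x|_F$, the unique norm-one functional on $F$ attaining $\|x\|$. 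The closed unit ball of $F^*$ is weak-$*$ sequentially compact (Banach--Alaoglu together with the separability of $F$), so along a subsequence $\ell_{y_{\lambda_n}-y}|_F\rightharpoonup\ell$ for some $\ell\in F^*$, and Lemma~\ref{weak} applied in $F$ gives $\ell=\ell_{\bar y-y}|_F$. Decomposing $\ell_{y_{\lambda_n}-y}(y_{\lambda_n})=\ell_{y_{\lambda_n}-y}(y_{\lambda_n}-\bar y)+\ell_{y_{\lambda_n}-y}(\bar y)$, the first summand is bounded by $\|y_{\lambda_n}-\bar y\|\to 0$ and the second converges, along the subsequence, to $\ell_{\bar y-y}(\bar y)$; hence $\ell_{\bar y-y}(\bar y)\leqslant 0$. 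Thus every accumulation point $\bar y$ satisfies $\ell_{\bar y-y}(\bar y)\leqslant 0$ for all $y\in\Fix(f)\setminus\{\bar y\}$.

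Finally, the uniqueness argument from the proof of Theorem~\ref{fp2} applies verbatim: if $\bar y\neq z$ were two accumulation points, then $\ell_{\bar y-z}(\bar y)\leqslant 0$ and $\ell_{z-\bar y}(z)\leqslant 0$, and since $\ell_{-v}=-\ell_v$ for the Gateaux differential, summing these yields $\ell_{\bar y-z}(\bar y-z)=-\|\bar y-z\|\leqslant 0$, which is absurd. So $(y_\lambda)$ has a single accumulation point and, $C$ being compact, converges. I expect the delicate point to be precisely this limiting step: one must descend to a separable subspace in order to invoke weak-$*$ sequential compactness of the dual ball and to justify applying Lemma~\ref{weak} there; alternatively one may stay in $E^*$ and extract a weak-$*$ convergent \emph{subnet}, the proof of Lemma~\ref{weak} carrying over unchanged to nets. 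Everything else is a transcription of the $C^1$ argument.
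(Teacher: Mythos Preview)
Your proposal is correct and follows essentially the same route as the paper: restrict to the separable closed span of $C$, use weak-$*$ sequential compactness of the dual unit ball to extract a convergent subsequence of the functionals $\ell_{y_{\lambda_n}-y}$, identify the limit via Lemma~\ref{weak}, and then finish with the uniqueness argument from Theorem~\ref{fp2}. Your write-up is in fact a touch more careful than the paper's (distinguishing weak-$*$ from weak, spelling out the decomposition $\ell_{y_{\lambda_n}-y}(y_{\lambda_n})=\ell_{y_{\lambda_n}-y}(y_{\lambda_n}-\bar y)+\ell_{y_{\lambda_n}-y}(\bar y)$, and noting the subnet alternative), but the substance is the same.
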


\begin{proof}
Let us give the elements to adapt the proof of Theorem \ref{fp2}. Up to restricting to $\overline{Vect(\{x\in C\})}$, we assume that $E$ is separable. It follows from a theorem of Banach that any bounded sequence in $E^* $ has a weakly converging subsequence.

Let $\lambda_n\to 1$ be such that $y_{\lambda_n} \to \bar y \in \Fix (f)$. Let $y\in \Fix (f )$ be such that $y\neq \bar y$. Up to extracting we may assume that $\ell_{y_{\lambda_n}-y}$ is weakly converging. Using lemma \ref{weak}, it follows that $\ell_{y_{\lambda_n}-y} \rightharpoonup \ell_{\bar y - y}$.

We now pass to the limit to obtain that 

$$ \ell_{\bar y - y}(\bar y) = \lim_{n\to +\infty} \ell_{y_{\lambda_n}-y} (y_{\lambda_n}) \leqslant 0.$$
The rest of the proof is the same as in Theorem \ref{fp2}.
\end{proof}

\section{The Hilbert case}\label{Hilbert}

We assume now that $(H , \langle \cdot , \cdot \rangle)$ is a real  Hilbert space. We denote by $\|\cdot \|$ the induced norm. In this section, $C\subset H$ is a closed, convex and bounded set such that $0\in C$ and $f : C\to C$ is a $1$-Lipschitz map. We will use the same notations as in section \ref{euclide}. We state without more detailed proofs:

\begin{lm}
 The set $\Fix(f)$ of fixed points of $f$ is a closed, nonempty and convex set.
 \end{lm}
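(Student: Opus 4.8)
The plan is to adapt the argument from the pre-Hilbertian compact case (the lemma preceding Theorem \ref{fixe1}) to the setting where $C$ is only closed, convex and bounded, rather than compact. The three properties to establish are: $\Fix(f)$ is closed, nonempty, and convex.

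First I would treat closedness and convexity, since these are the easy parts and do not use any new hypothesis beyond what was already available. Closedness is immediate from the continuity of $f$: $\Fix(f) = \{x \in C : f(x) = x\}$ is the preimage of $\{0\}$ under the continuous map $x \mapsto f(x) - x$, intersected with the closed set $C$. Convexity follows verbatim from the argument given in the pre-Hilbertian case: given fixed points $x, y$ and $t \in (0,1)$, the point $z = tx + (1-t)y$ satisfies $\|z - x\| = (1-t)\|y-x\|$ and $\|z - y\| = t\|y-x\|$; the $1$-Lipschitz property forces $f(z)$ into both closed balls $\overline{B(x,(1-t)\|y-x\|)}$ and $\overline{B(y,t\|y-x\|)}$, whose intersection is the singleton $\{z\}$ by strict convexity of the Hilbert norm. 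Hence $f(z) = z$. None of this used compactness.

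The main obstacle is nonemptiness, because the original proof obtained a fixed point as an accumulation point of $(x_\lambda)_{\lambda \to 1}$, and that compactness argument is no longer available when $C$ is merely closed and bounded in an infinite-dimensional Hilbert space. Here I would instead invoke the machinery already developed: by the lemma asserting that $\lambda \mapsto \|y_\lambda\|$ is constant (if $0 \in \Fix(f)$, in which case we are done) or increasing, the family $(y_\lambda)$ stays bounded, and one can extract a weakly convergent subnet $y_{\lambda_n} \rightharpoonup \bar y$ as $\lambda_n \to 1$; since $C$ is closed, convex and bounded it is weakly compact, so $\bar y \in C$. Then, exactly as in the proof of Theorem \ref{fixe1}, Lemma \ref{monotone} applied to $x = y_\lambda$ and an arbitrary $y \in C$ gives monotonicity-type estimates; combined with $\lambda f(y_\lambda) = y_\lambda$ and the relation $y_\lambda - f(y_\lambda) = (1 - \tfrac1\lambda)y_\lambda \to 0$ in norm, one deduces that $\bar y - f(\bar y)$ is orthogonal to every direction, hence $f(\bar y) = \bar y$. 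Alternatively, and more in keeping with the phrase ``without more detailed proofs'', one simply cites that a nonexpansive self-map of a closed bounded convex subset of a Hilbert space has a fixed point — this is the classical Browder–Göhde–Kirk theorem, and the reference \cite{Browder} already cited in the paper covers it.

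I expect the cleanest write-up to be: (i) one line for closedness; (ii) the ball-intersection argument for convexity, noting it is identical to the compact case; (iii) for nonemptiness, either the weak-limit extraction using the boundedness of $(y_\lambda)$ together with Lemma \ref{monotone} and $y_\lambda - f(y_\lambda) \to 0$, or a direct appeal to the Browder–Göhde–Kirk fixed point theorem. The subtle point to get right in the weak-limit approach is that $f$ need not be weakly continuous, so one cannot directly pass to the limit in $f(y_{\lambda_n}) = \tfrac1{\lambda_n} y_{\lambda_n}$; the monotonicity of $\mathrm{Id} - f$ is precisely what repairs this, since monotone operators are ``demiclosed enough'' to let the inequality $\langle (\mathrm{Id}-f)(y_\lambda) - (\mathrm{Id}-f)(y), y_\lambda - y\rangle \geqslant 0$ survive passage to the weak limit in the first slot when the second factor converges strongly — and here one tests against $y$ ranging over a dense set of directions to conclude $(\mathrm{Id}-f)(\bar y) = 0$.
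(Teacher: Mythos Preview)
Your proposal is correct and matches the paper exactly: for this lemma the paper states it ``without more detailed proofs,'' observing only that convexity is obtained ``as previously'' (your ball-intersection argument) and that nonemptiness ``is the content of \cite{Browder}'' --- precisely your suggested citation of the Browder--G\"ohde--Kirk theorem. Your alternative weak-limit route to nonemptiness is in fact the argument the paper carries out later inside the proof of Theorem~\ref{hilbert} (one small slip: there it is $(\mathrm{Id}-f)(y_\lambda)=(1-\tfrac1\lambda)y_\lambda$ that converges strongly and $y_\lambda-y$ only weakly, and the paper invokes Kirszbraun to make the test points $y=\bar y+tv$ legitimate).
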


Indeed, the convexity is obtained as previously, the fact it is not empty is the content of \cite{Browder}.

\begin{lm}\label{monotonebis}
The function $Id - f : C\to H$ is monotone meaning that 
$$\forall (x,y)\in H \times H,\quad \big\langle \big(x-f(x) \big) -\big(y-f(y)\big), x-y\big\rangle \geqslant 0.$$
\end{lm}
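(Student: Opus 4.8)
The plan is to notice that the proof of Lemma \ref{monotone} carries over word for word. That computation used only the Cauchy--Schwarz inequality and the fact that $f$ is $1$-Lipschitz; it never invoked finite-dimensionality of $E$ nor compactness of $C$, only that the two points at which we evaluate $f$ lie in its domain. So I would simply reproduce it in the present Hilbert setting.

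Explicitly, given $x,y\in C$, I first expand
$$\big\langle \big(x-f(x)\big)-\big(y-f(y)\big),\, x-y\big\rangle = \|x-y\|^2 - \big\langle f(x)-f(y),\, x-y\big\rangle.$$
Then the Cauchy--Schwarz inequality gives $\big\langle f(x)-f(y),\, x-y\big\rangle \leqslant \|f(x)-f(y)\|\cdot\|x-y\|$, and the $1$-Lipschitz hypothesis gives $\|f(x)-f(y)\| \leqslant \|x-y\|$. Combining the two, the displayed quantity is at least $\|x-y\|^2 - \|x-y\|^2 = 0$, which is exactly the asserted monotonicity.

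There is essentially no obstacle here: the statement is purely metric and algebraic, hence insensitive to the topological difference between this section and Section \ref{euclide} (where $C$ was assumed compact). The only thing worth flagging is that the absence of compactness will matter later, not in this lemma but when one wishes to extract limit points of the family $(y_\lambda)$; there the norm compactness of $C$ used in Theorem \ref{fixe1} has to be replaced by weak compactness of bounded, closed, convex subsets of the Hilbert space $H$, together with Lemma \ref{weak}-type arguments to identify weak limits.
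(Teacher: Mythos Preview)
Your proof is correct and is exactly what the paper has in mind: the paper introduces this lemma with the phrase ``We state without more detailed proofs,'' precisely because the computation of Lemma~\ref{monotone} (Cauchy--Schwarz followed by the $1$-Lipschitz bound) carries over verbatim to the Hilbert setting, as you observe.
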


\begin{lm}
The function $\lambda \mapsto \|y_\lambda\|$ is constant if $0\in \Fix(f)$ and increasing otherwise.
\end{lm}

\begin{Th}\label{hilbert}
The family $(y_\lambda)_{\lambda \in (0,1)}$ weakly converges as $\lambda \to 1$ to $x^*$ the orthogonal projection of $0$ on $C$.
\end{Th}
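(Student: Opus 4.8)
The plan is to combine the monotonicity of $Id-f$ (Lemma \ref{monotonebis}) with the boundedness of $(y_\lambda)$ exactly as in the pre-Hilbertian case, Theorem \ref{fixe1}, but now using weak rather than norm compactness. First I would dispose of the trivial case $0\in\Fix(f)$, where $y_\lambda=0$ for all $\lambda$ and the orthogonal projection of $0$ on $C$ is $0$ itself (note $0\in C$). In the remaining case, I would apply Lemma \ref{monotonebis} to $x=y_\lambda$ and an arbitrary $y\in C$ — here I want to use it with a point of $C$, not of $\Fix(f)$, which forces a small modification of the computation. Writing $y_\lambda=\lambda f(y_\lambda)$, one gets
\[
\big\langle \big(y_\lambda-f(y_\lambda)\big)-\big(y-f(y)\big),\ y_\lambda-y\big\rangle\geqslant 0.
\]
Since $f(y_\lambda)=\tfrac1\lambda y_\lambda$, the term $y_\lambda-f(y_\lambda)=\big(1-\tfrac1\lambda\big)y_\lambda$, so this reads $\big(1-\tfrac1\lambda\big)\langle y_\lambda,y_\lambda-y\rangle-\langle y-f(y),y_\lambda-y\rangle\geqslant 0$. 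The second term is bounded (as $C$ is bounded and $f(C)\subset C$), say $|\langle y-f(y),y_\lambda-y\rangle|\leqslant M$, so $\langle y_\lambda,y_\lambda-y\rangle\leqslant \frac{M}{1/\lambda-1}\to 0$ as $\lambda\to 1$. Hence $\limsup_{\lambda\to1}\langle y_\lambda,y_\lambda-y\rangle\leqslant 0$ for every $y\in C$.

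Next I would extract a weakly convergent subnet (or subsequence, after reducing to a separable closed subspace as in the proof of Theorem \ref{fp3}): since $C$ is closed, convex and bounded it is weakly compact, so any net $(y_{\lambda})_{\lambda\to1}$ has a weak limit point $z\in C$. Fix such a $z=\operatorname*{w\!-\!lim} y_{\lambda_n}$. For any $y\in C$, weak convergence gives $\langle z,z-y\rangle\leqslant\liminf_n\langle y_{\lambda_n},y_{\lambda_n}-y\rangle$? — this inequality goes the wrong way, so instead I would write $\langle y_{\lambda_n},y_{\lambda_n}-y\rangle=\|y_{\lambda_n}\|^2-\langle y_{\lambda_n},y\rangle$ and use that $\|z\|^2\leqslant\liminf\|y_{\lambda_n}\|^2$ together with $\langle y_{\lambda_n},y\rangle\to\langle z,y\rangle$ to get $\|z\|^2-\langle z,y\rangle\leqslant\liminf\big(\|y_{\lambda_n}\|^2-\langle y_{\lambda_n},y\rangle\big)=\liminf\langle y_{\lambda_n},y_{\lambda_n}-y\rangle\leqslant 0$. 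Therefore $\langle z,z-y\rangle\leqslant 0$ for all $y\in C$, which is exactly the variational characterization of $z$ being the orthogonal projection of $0$ onto the closed convex set $C$. By uniqueness of that projection, every weak limit point equals $x^*$, and since the weak topology on the weakly compact set $C$ makes the net converge to its unique limit point, $(y_\lambda)$ weakly converges to $x^*$.

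The main obstacle is the lower-semicontinuity subtlety: the quantity $\langle y_\lambda,y_\lambda-y\rangle$ is \emph{not} weakly continuous because of the quadratic term $\|y_\lambda\|^2$, so one cannot simply ``pass to the limit'' as in the finite-dimensional argument of Theorem \ref{fixe1}. The fix is to split off $\|y_\lambda\|^2$ and exploit weak lower semicontinuity of the norm in the favourable direction — the inequality $\langle y_\lambda,y_\lambda-y\rangle\leqslant 0$ survives the limit precisely because we only need an \emph{upper} bound on $\langle z,z-y\rangle$, and weak l.s.c. of $\|\cdot\|^2$ delivers exactly that. A secondary point worth a sentence is the legitimacy of reducing to sequences: restricting $f$ to $C\cap\overline{\operatorname{Vect}(C)}$ if necessary, or simply arguing with nets throughout, makes the weak-compactness extraction rigorous without separability worries; I would follow the paper's earlier convention and reduce to the separable case so that weakly convergent subsequences suffice.
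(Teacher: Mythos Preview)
There is a genuine gap. First, note that the theorem as stated contains a typo: since $0\in C$, the orthogonal projection of $0$ on $C$ is $0$ itself, which is not the intended conclusion. The paper's own proof (and the pre-Hilbertian analogue, Theorem~\ref{fixe1}) makes clear that the limit is the projection of $0$ on $\Fix(f)$, not on $C$. You followed the literal statement and this led you astray.

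The concrete error is the limit $\dfrac{M}{1/\lambda-1}\to 0$: for $\lambda\in(0,1)$ one has $1/\lambda-1\to 0^+$ as $\lambda\to 1$, so this quotient tends to $+\infty$, not $0$. Hence your inequality $\limsup_{\lambda\to1}\langle y_\lambda,y_\lambda-y\rangle\leqslant 0$ is \emph{not} established for arbitrary $y\in C$; it only follows when $y-f(y)=0$, i.e.\ when $y\in\Fix(f)$, because then the troublesome term vanishes and one gets $\langle y_\lambda,y_\lambda-y\rangle\leqslant 0$ directly (no division needed). Your weak-l.s.c.\ argument for passing to the limit in this inequality is fine, and yields $\langle z,z-y\rangle\leqslant 0$ for all $y\in\Fix(f)$, which characterises the projection of $0$ on $\Fix(f)$.

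But this leaves the real new difficulty of the Hilbert (non-compact) case untouched: to conclude that $z=x^*$ you must know $z\in\Fix(f)$, and $f$ is not weakly continuous in general, so weak convergence $y_{\lambda_n}\rightharpoonup z$ alone does not give $f(z)=z$. The paper handles this with the Minty--Browder trick: apply the monotonicity inequality with $x=z+tv$ and $y=y_{\lambda_n}$, pass to the weak limit (the term $(1/\lambda_n-1)y_{\lambda_n}$ goes strongly to $0$ since $\|y_{\lambda_n}\|$ is bounded), obtain $t\langle z+tv-f(z+tv),v\rangle\geqslant 0$ for all $t$ and $v$, and let $t\to 0^\pm$ to deduce $z=f(z)$. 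This step is the essential novelty over the compact case and is missing from your plan.
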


\begin{proof}
By Kirszbraun's Theorem, we may extend $f$ to a $1$-Lipschitz map from $H$ to itself, that we still denote by $f$. Note that the previous lemmas remain true for this extension of $f$ to $H$. Recall that a closed bounded set is weakly compact.

Let $\lambda_n\to 1$ be such that $y_{\lambda_n}\rightharpoonup y^*$.

We first prove that $y^*$ is a fixed point of $f$ following \cite{Browder}. We apply lemma \ref{monotonebis} to $x= y^*+tv$ for any vector $v\in H$, $t\in \R$ and $y = y_{\lambda_n}$ to obtain
$$ \big\langle y^*+tv-f(y^*+tv)  +\big(\frac{1}{\lambda}_{n}-1\big)y_{\lambda_n}, tv+y^*-y_{\lambda_n}\big\rangle \geqslant 0.$$

As $n\to +\infty$, the left hand side of the bracket strongly converges to $y^*+tv-f(y^*+tv) $ (because the $y_{\lambda_n}$ are bounded) and the right hand side weakly converges to $tv$. Hence we obtain, passing to the limit that
$$\forall t\in \R, \ \forall v\in H, \quad t\langle y^*+tv-f(y^*+tv) , v\rangle \geqslant 0.$$

Letting $t>0$ going to $0$, we discover that $\langle y^* - f(y^*) , v\rangle \geqslant 0$ and letting $t<0$ going to $0$,  that $\langle y^* - f(y^*) , v\rangle \leqslant 0$. In conclusion
$$\forall v \in H , \quad \langle y^* - f(y^*),v\rangle = 0,$$
which establishes that $y^* $ is a fixed point of $f$.

We now notice that  by weak convergence,
\begin{equation}\label{infe}
\|y_{\lambda_n}\| \cdot \|y^* \| \geqslant \langle y_{\lambda_n},y^* \rangle  \to \|y^*\|^2.
\end{equation}

Hence $\|y^*\| \leqslant \lim\limits_{n\to +\infty} \|y_{\lambda_n}\|$. Applying now lemma \ref{monotonebis} to $x= y_{\lambda_n}$ and $y\in \Fix(f)$ it follows that 
$\langle y_{\lambda_n} , y_{\lambda_n}-y\rangle \leqslant 0$ for all $n$. Then, using \eqref{infe} and by weak convergence we get
$$\langle y^* , y^*-y \rangle = \|y^*\|^2 - \langle y^* , y \rangle \leqslant \lim_{n\to +\infty} \|y_{\lambda_n}\|^2 - \lim_{n\to +\infty} \langle y_{\lambda_n}, y \rangle \leqslant 0.$$

Hence $y^* $ is again the orthogonal projection of $0$ on $\Fix(f)$.

\end{proof}

\section{A concluding remark}

In all of the versions of our result, the point $0$ plays a distinguished role. This is arbitrary and can be dropped. In any of the contexts of Theorem \ref{fp1}, \ref{fixe1}, \ref{fp2}, \ref{fp3} or \ref{hilbert}, let us not assume anymore that $0\in C$. If $x\in C$ and $\lambda\in (0,1)$, the map $f_{\lambda,x} : y\mapsto \lambda f(y)+(1-\lambda) x$ is well defined and a contraction. If $y_{\lambda,x}$ is its unique fixed point, our theorem yields that as $\lambda\to 1$,  $y_{\lambda,x}$ converges to a fixed point of $f$ that we denote by $y_x\in \Fix(f)$. It is characterized by the property:
$$\forall z\in \Fix(f)\setminus\{y_x\}, \quad \ell_{y_x-z}(y_x-x) \leqslant 0.$$
If $(x,x')\in C\times C$ are such that $y_x\neq y_{x'}$, by two symmetric applications of the previous property, it follows that
$$ \ell_{y_x-y_{x'}}(y_x-x)-\ell_{y_x-y_{x'}}(y_{x'}-x') = \ell_{y_x-y_{x'}}(y_x-y_{x'})-\ell_{y_x-y_{x'}}(x-x')\leqslant 0.$$
Hence
$$\|y_x-y_{x'}\| \leqslant \ell_{y_x-y_{x'}}(x-x')\leqslant \|x-x'\|.$$
It follows that the map $x\mapsto y_x$ is $1$-Lipschitz. As it is obviously the identity on $\Fix(f)$ it is a continuous retraction of $C$ on $\Fix(f)$\footnote{Note that such a retraction in the cases where $C$ is compact can be obtained by a fixed point argument applied to the mapping $\rho \mapsto f\circ \rho$.}.
\appendix

\section{An example where $(x_\lambda)_{\lambda \in (0,1)}$ diverges}\label{A}

We consider here $(\R^2, \|\cdot \|_1)$ where the norm is defined by $\|(x,y)\|_1 = |x| + |y|$. The convex compact set we are interested in is the triangle
$$\T = \left\{  (x,y) \in \R^2 , \ \ -\frac12 \leqslant y \leqslant -|x|+\frac12 \right\}.$$
Of course, $(0,0)\in \T$. If $\alpha\in (0,1)$, we aim at constructing a function of the form $f(x,y) = \big( x+\e(y), \alpha \big(y+\frac12 \big) - \frac12 \big) $ where $\e : \big[-\frac12,\frac12\big] \to \R$ is a function to be determined verifying $\e\big(-\frac12\big)=0$. It will follow that the bottom side of $\T$ will be made of fixed points of $f$.

As we want $f$ to take values in $\T$, we have to check that for $y\in [-\frac12, \frac12]$, if $x\in \big[y-\frac12,\frac12 - y \big]$ then 
$-\frac12 \leqslant \alpha \big(y+\frac12 \big) - \frac12  \leqslant -|x+\e(y)|+\frac12$.
This is verified if and only if
\begin{equation}\label{condition1}
\forall y\in  \Big[-\frac12, \frac12 \Big],\quad |\e(y) | \leqslant (1-\alpha)\Big(y+\frac12\Big).
\end{equation}

The condition that $f$ is $1$-Lipschitz is realized if for all $(x,y)$ and $(x',y')$ in $\T$
$$\|f(x,y) - f(x',y')\|_1 =| x-x'+\e(y)-\e(y') | +  \alpha|y-y'| \leqslant |x-x'| + |y-y'|.$$ 

This is true if we take $\e$ to be $(1-\alpha)$-Lipschitz. Note that if this is the case and $\e\big(-\frac12\big)=0$ then \eqref{condition1} is verified.

We now assume  $\e$ is $(1-\alpha)$-Lipschitz. If $\lambda \in (0,1)$ remember that $f_\lambda$ is defined by $f_\lambda(x,y) = f(\lambda x, \lambda y)$. Denoting by $X_\lambda = (x_\lambda, y_\lambda)\in \T$ the unique fixed point of $f_\lambda$, an explicit computation gives 
$$(x_\lambda,y_\lambda) = \left( \frac{1}{1-\lambda}\varepsilon\Big(\frac{\lambda(\alpha-1)}{2(1-\alpha \lambda)} \Big) , \frac{\alpha -1}{2(1-\alpha \lambda)} \right).$$

By setting $g(1-\lambda) = \frac{\lambda(\alpha-1)}{2(1-\alpha \lambda)} $, one checks that 
$$g^{-1}(\mu) = \frac{(\alpha-1)(1+2\mu)}{\alpha-1-2\alpha \mu}.$$
Hence $g$ is an increasing bi-Lipschitz map from $[0,1]$ to $[-1/2,0]$.
We now set $h : \R \to \R$ defined by $h(x) = x\sin\big(\ln(x)\big)$ for $x\neq 0$ and $h(0)=0$ that is a Lipschitz function. It follows that for $\varepsilon_0>0$ small enough, the function $\varepsilon = \varepsilon_0 h \circ g^{-1} $ is $(1-\alpha)$-Lipschitz on $\big[-\frac12 , 0\big]$ and verifies $\e\big(-\frac12\big) = 0$. We extend it by $\e(y) = \e(0)$ if $y\in \big[0, \frac12 \big]$. 

For the function $f$ obtained this way, we find that $$(x_\lambda , y_\lambda ) =\big(\sin\big(\ln(1-\lambda)\big), \frac{1}{\lambda}g(1-\lambda)\big).$$
It does not converge as $\lambda \to 1$. 

\section{An example where $\Fix (f)$ is not convex}

The setting here is $(\R^2, \|\cdot \|_\infty)$ with $\|(x,y)\|_\infty = \max (|x|, |y|)$. Let $g : \R \to \R$ be a $\frac12$-Lipschitz map and $f : (x,y) \mapsto \big(x,g(x)\big)$ be the vertical projection on the graph of $g$. Then $f$ is $1$-Lipschitz. Indeed, if $\big((x,y),(x',y')\big) \in (\R^2)^2$,
$$ \|f(x,y)-f(x',y')\|_\infty = \max(|x-x'|, |g(x)-g(x')|) = |x-x'| \leqslant \|(x,y)-(x',y')\|_\infty.$$
Of course, the set of fixed points of $f$ is the graph of $g$ which is in general not convex.

Let now $R= [-1,1]\times [-M,M] \subset \R^2$ with $M = \max(|g(x)|, \ \ x\in [-1,1])$. Then by still denoting the restriction of $f$ to $R$ by $f$, we obtain a map $f : R\to R$. If $\lambda \in (0,1)$, the  point $X_\lambda$ such that $\lambda f(X_\lambda) = X_\lambda$ is $\big(0,\lambda f(0)\big)$ and obviously $X_\lambda \to \big(0,f(0)\big)$ as $\lambda \to 1$.

The norm $\|\cdot \|_\infty$ is not $C^1$ on $\R^2 \setminus \{(0,0)\}$. Let us consider $\| \cdot \|$ a norm obtained from $\|\cdot \|_\infty$ by rounding off the corners of the unit ball. It then verifies that $\|X\|\geqslant \|X\|_\infty$ for all $X\in \R^2$. Let us assume $\| \cdot \|$ is close enough to $\|\cdot \|_\infty$ in the sense that $\|(1,y)\| = 1$ is $y\in [-1/2,1/2]$. Then for all  pairs $(x,x')\in \R^2$, $\big\|\big(x,g(x)\big) - \big(x',g(x')\big)\big\| = \big\|\big(x,g(x)\big) - \big(x',g(x')\big)\big\|_\infty = |x-x'|$. It follows that for all $(x,x',y,y')\in \R^4$,
\begin{multline*}
 \|f(x,y)-f(x',y')\| =\big\|\big(x,g(x)\big) - \big(x',g(x')\big)\big\| \\
 = |x-x'| \leqslant \|(x,y)-(x',y')\|_\infty \leqslant \|(x,y)-(x',y')\|.
\end{multline*}
 Hence $f$ is still $1$-Lipschitz for $\|\cdot \|$.

\bibliography{fixe}
\bibliographystyle{siam}

\end{document}